\documentclass[11pt,amsfonts, epsfig]{amsart}
\usepackage{amsmath, amscd, amssymb}
\usepackage{graphicx, psfrag}
\usepackage{graphpap, color}
\usepackage{mathrsfs}
\usepackage{pstricks}
\usepackage{color}
\usepackage{cancel}
\usepackage[mathscr]{eucal}

\usepackage{pstricks}
\usepackage{color}
\usepackage{cancel}
\usepackage{verbatim}
\usepackage{latexsym}
\usepackage[all]{xy}

\def\contri{\text{Contr}}

\topmargin 0.0in
\setlength{\textheight}{13cm}
\setlength{\evensidemargin}{\oddsidemargin}

\addtolength{\oddsidemargin}{-.2in}
\addtolength{\evensidemargin}{-.2in}
\addtolength{\textwidth}{.95cm}
\textheight 7.8in

\usepackage{lipsum}
\makeatletter
\g@addto@macro{\endabstract}{\@setabstract}
\newcommand{\authorfootnotes}{\renewcommand\thefootnote{\@fnsymbol\c@footnote}}%
\makeatother


\usepackage{upgreek}
\usepackage{wasysym}
\usepackage{ esint }

\def\fc{\mathfrak c}

\def\gg{{\mathfrak g}}

\def\vdim{\mathrm{vir}.\dim}

\def\virt{^{\vir}}
\def\virtloc{\virt\loc}

\def\bcM{\overline{\cM}}

\numberwithin{equation}{section}

\def\gm{\GG_m}

\def\loc{_1^\circ}

\def\sO{{\mathscr O}}
\def\sC{{\mathscr C}}

\def\sM{{\mathscr M}}
\def\sN{{\mathscr N}}
\def\sL{{\mathscr L}}
 
\def\sI{{\mathscr I}}
\def\sO{\mathscr{O}}
\def\sI{\mathscr{I}}

\def\sF{\mathscr{F}}
\def\sV{\mathscr{V}}
\def\sA{\mathscr{A}}
\def\sU{\mathscr{U}}
\def\sR{\mathscr{R}}
\def\sX{\mathscr{X}}

\newcommand{\CC}{\mathbb{C}}

\newcommand{\EE}{\mathbb{E}}

\newcommand{\PP}{\mathbb{P}}
\newcommand{\QQ}{\mathbb{Q}}

\newcommand{\ZZ}{\mathbb{Z}}
\newcommand{\GG}{\mathbb{G}}

\newcommand{\TT}{\mathbb{T}}

\def\lwd{_{\cW/\cD}}

\def\umv{^{\mathrm{mv}}}


\newcommand{\bL}{\mathbf{L}}
\newcommand{\bT}{\mathbf{T}}

\newcommand{\fw}{{\mathfrak w}}

\def\redd{{\mathrm{red}}}

\def\Gm{{\bG_m}}

\def\upmo{^{-1}}

\newcommand{\Def}{ \mathrm{Def} }
\newcommand{\Obs}{ \mathrm{Obs} }
\newcommand{\vir}{ {\mathrm{vir}} }
\newcommand{\ev}{ \mathrm{ev} }
\newcommand{\vg}{\vec{g}}

\newcommand{\vS}{\vec{S}}
\newcommand{\vGa}{\Ga}
\def\cA{\mathcal A}
\def\Wfix{\cW_\Ga} 
\def\fixW{\cW^{\,\Gm}}

\newcommand{\cal}{\mathcal}
\def\ee{\mathfrak e}

\def\cC{{\cal C}}
\def\cD{{\cal D}}
\def\cE{{\cal E}}
\def\cF{{\cal F}}
\def\cH{{\cal H}}

\def\cL{{\cal L}}
\def\cM{{\cal M}}
\def\cN{{\cal N}}
\def\cO{{\cal O}}
\def\cP{{\cal P}}
\def\cQ{{\cal Q}}

\def\cV{{\cal V}}
\def\cW{{\cal W}}

\def\cS{{\cal S}}
\def\cX{X} 


\def\fC{\mathfrak{C}}

\def\ft{\mathfrak{t}}



\def\v1{{\vec{1}}}

\def\sP{{\mathscr P}}


\def\fl{{\mathrm{fl}}}



\newcommand{\Mbar}{\overline{\cM}}

\def\mapright#1{\,\smash{\mathop{\lra}\limits^{#1}}\,}




\newcommand{\vd}{\vec{d}}


\def\dual{^{\vee}}

\def\sta{^\ast}
\def\st{^{\mathrm{st}}}
\def\virt{^{\mathrm{vir}}}
\def\upmo{^{-1}}
\def\sta{^{\ast}}
\def\dpri{^{\prime\prime}}

\def\sta{^*}


\def\lra{\longrightarrow}

\def\lsta{_{\ast}}


\newcommand{\Del}{\Delta}
\newcommand{\Si}{\Sigma}
\newcommand{\Ga}{\Gamma}

\newcommand{\ep}{\epsilon}
\newcommand{\lam}{\lambda}

\def\sQ{\mathscr Q}

\def\lrga{_{(\ga)}}

\def\begeq{\begin{equation}}
\def\endeq{\end{equation}}
\def\and{\quad{\rm and}\quad}
\def\bl{\bigl(}
\def\br{\bigr)}
\def\defeq{:=}

\def\sub{\subset}
\def\Ao{{\mathbb A}^{\!1}}

\def\Po{{\mathbb P^1}}
\def\and{\quad\text{and}\quad}

\def\reg{{\mathrm{reg}}}


 \DeclareMathOperator{\Ext}{Ext}
  
 \DeclareMathOperator{\Aut}{Aut}
 
\DeclareMathOperator{\id}{id} 
 \DeclareMathOperator{\res}{res}
 
 \DeclareMathOperator{\rank}{rank}
\DeclareMathOperator{\spec}{Spec}

\def\lggd{_{g,\gamma,\bd}}
\def\cWg{\cW\lggd}
\def\lgd{_{g,\bd}}


\newtheorem{prop}{Proposition}[section]
\newtheorem{proposition}[prop]{Proposition}
\newtheorem{theo}[prop]{Theorem}
\newtheorem{lemm}[prop]{Lemma}
\newtheorem{coro}[prop]{Corollary}
\newtheorem{rema}[prop]{Remark}
\newtheorem{exam}[prop]{Example}
\newtheorem{defi}[prop]{Definition}
\newtheorem{definition}[prop]{Definition}
\newtheorem{conj}[prop]{Conjecture}

\newtheorem{cons}[prop]{Construction}

\newtheorem{defi-prop}[prop]{Definition-Proposition}
\newtheorem{defi-theo}[prop]{Definition-Theorem}


\DeclareMathOperator{\coker}{coker}

\def\sS{\mathscr S}

\def\Ob{\cO b}

\def\nar{^{\mathrm{na}}}
\def\bro{^{\mathrm{br}}}

\def\loc{_{\mathrm{loc}}}

\def\ev{\text{ev}}

\def\sta{^\ast}

\def\sO{{\mathscr O}}

\def\sR{{\mathscr R}}
\def\sD{{\mathscr D}}

\def\beq{\begin{equation}}
\def\eeq{\end{equation}}

\def\vsp{\vskip3pt}
\def\Pf{{\PP^4}}

\def\bee{\begin{equation}}
\def\eeq{\end{equation}}

\def\sC{{\mathscr C}}

\def\bd{{\mathbf d}}

\let\eps=\epsilon

\def\ti{\tilde}

\def\barM{{\overline{M}}}







\def\mapright#1{\,\smash{\mathop{\lra}\limits^{#1}}\,}

\def\un{^{\mathrm{un}}}

\def\mufive{{\boldsymbol\mu_5}}

\let\ofth=\fo
\def\lred{_{\mathrm{red}}}
\def\fractf{\frac{2}{5}}

\def\bmu{{\boldsymbol \mu}}

\def\Gm{T}

\let\ga=\Ga

\def\lorho{_{^{(1,\rho)}}}
\def\lophi{_{^{(1,\varphi)}}}

\begin{document}

\title[GW and FJRW Invariants of quintic Calabi-Yau manifolds]{An effective
theory of GW and FJRW invariants of quintics Calabi-Yau manifolds}

\author[Huai-Liang Chang]{Huai-Liang Chang$^1$}
\address{Department of Mathematics, Hong Kong University of Science and Technology, Hong Kong} \email{mahlchang@ust.hk}
\thanks{${}^1$Partially supported by Hong Kong GRF grant 600711,  GRF 16301515 and  GRF 16301717}

\author[Jun Li]{Jun Li$^2$}
\address{Department of Mathematics, Stanford University,
USA; \hfil\newline 
\indent Shanghai Center for Mathematical Sciences, Fudan University, China} \email{jli@stanford.edu}
\thanks{${}^2$Partially supported by  NSF grant DMS-1564500 and DMS-1601211.}

\author[Wei-Ping Li]{Wei-Ping Li$^3$}
\address{Department of Mathematics, Hong Kong University of Science and Technology, Hong Kong} \email{mawpli@ust.hk}
\thanks{${}^3$Partially supported by Hong Kong GRF grant 602512 and  GRF 16301515.}

\author[Chiu-Chu Melissa Liu]{Chiu-Chu Melissa Liu$^4$}
\address{Mathematics Department, Columbia University}
 \email{ccliu@math.columbia.edu}
\thanks{${}^4$Partially supported by  NSF grant DMS-1206667, DMS-1159416 and DMS-1564497.
}


\maketitle

\begin{abstract}  We analyze the torus fixed loci of Mixed Spin P fields moduli, and deduce its localization formulas with explicit factors.  An algorithm toward evaluating quintic's Gromov-Witten and Fan-Jarvis-Ruan-Witten invariants is derived. 
\end{abstract}

\section{Introduction}

In \cite{CLLL}, we introduced the notion of Mixed-Spin-P (MSP) fields
\beq\label{MSP0}
\xi=( {\Si^\sC}, \sC, \sL, \sN,\varphi,\rho, \nu)
\eeq
consisting of pointed twisted curves $\Sigma^\sC\sub\sC$, line bundles $\sL$ and $\sN$,
and fields $\varphi\in H^0(\sL^{\oplus 5})$,
$\rho\in H^0(\sL^{\vee\otimes5}\otimes \omega^{\log}_{\sC})$, and  
$\nu\in H^0(\sL\otimes\sN\oplus \sN)$, subject
to conditions of non-degeneracy.
Its numerical invariants are the genus $g$ of $\sC$, the
monodromy $\gamma_i$ of $\sL$ at the marking $\Si^\sC_i\sub\Si^\sC$,
and the degrees $d_0=\deg \sL\otimes\sN$ and $d_\infty=\deg \sN$.

In the same paper, we constructed the moduli $\cW\lggd$ of the MSP fields of data $g$,
$\gamma=(\gamma_1,\cdots,\gamma_\ell)$ and $\bd=(d_0,d_\infty)$,
and proved that they are $\GG_m$-DM stacks,
have $\GG_m$-equivariant relative perfect obstruction theories, and admit
$\GG_m$-invariant cosections of their obstruction sheaves with proper degeneracy locus.
Applying the cosection localized virtual cycle construction,
we obtained  properly supported $\GG_m$-equivariant cycles $[\cW\lggd]\virtloc$.

\vsp

In this paper, we will derive a class of vanishings using these cycles. Applying the
virtual localization formula to these vanishings, we find polynomial
relations among  GW invariants and  FJRW invariants of Fermat quintic polynomials:
\beq\label{0}
\sum_\Gamma \res_{\ft=0}\Bigl(\ft^{\delta(g,\gamma,\bd)-1} \cdot\frac{[(\cW\lggd)^{\GG_m}_\Gamma]\virtloc}
{e(N_{(\cW\lggd)^{\GG_m}_\Gamma/\cW\lggd})}\Bigr)_0=0,\ \text{when}\ \ \delta(g,\gamma,\bd)>0.
\eeq
(See \S \ref{Sub5.1} for the notations for $\Gamma$ and \S \ref{subsection2.4} for the notations for $(\cW\lggd)_\Gamma$.)
Here $\ft$ is the generator in $A^{\GG_m}\lsta (B\GG_m)=\QQ[{\ft}]$;
$\delta(g,\gamma,\bd)$ is the virtual dimension of $\cW\lggd$.

These relations provide an effective algorithm to evaluate all genus GW invariants of
quintic threefolds, and all genus FJRW invariants of Fermat quintics with
$\fractf$-insertions,
provided that a range of ``initial" GW invariants
of quintic threefolds and FJRW invariants of  Fermat quintics are known.

More precisely,  let $N_{g,d}$ be the genus $g$ degree $d$ GW invariants of the quintic threefold, let $\fw_5=x_1^5+\cdots+x_5^5$, and let $\Theta_{g,k}$ be the genus $g$ FJRW invariants of $([\CC^5/\ZZ_5],\fw_5)$ with
$k$ many $\fractf$-insertions. (A $\fractf$-insertion is a marking $\Si_i\sub\Si^\sC$ so that the monodromy representation
of $\sL$ along $\Si_i$ is $\zeta_5^2$, where $\zeta_5=\exp(\frac{2\pi \sqrt{-1}}{5})$. For more details, see \S \ref{2.1}.)

\vsp
Using relations \eqref{0}, we obtain, for $g\geq 1$,


\begin{theo}\label{main-1} Let $d\ge g\geq 1$. The relations  \eqref{0} with $\gamma=\emptyset$ and $d_\infty=0$ provide an effective algorithm evaluating $N_{g,d}$, provided  \begin{enumerate}
 \item $N_{g',d'}$ are known for $g'\leq g$ and $d'<d$
\item $\varTheta_{g,k}$ are known for $k\leq 2g-2$ and $\varTheta_{g',k}$ are known for $g' < g, k\leq 2g-4$.
\end{enumerate}
\end{theo}

\begin{theo}\label{main-2}
For $g\geq 1$ and $k\ge 7g-2$, the relations \eqref{0} with $\gamma=\emptyset$, $d_0=0$ 
provide an effective algorithm to evaluate $\Theta_{g,k}$,
provided that $\{\varTheta_{g,k' }\}_{k'< k}$ and $\{\Theta_{g',k'}\}_{g'<g,k'\le k-2}$ are known.
\end{theo}

As a special case, in $g=1$ all FJRW invariants $\Theta_{1,k}$ for the  quintic singularity of $([\CC^5/\ZZ_5],\fw_5)$  can be evaluated.
(Recall that FJRW invariants $\Theta_{0,k}$ were calculated via GRR formula in \cite{CR}.) 
\medskip

The previous two theorems provide  a symmetric form of the algorithms.

\begin{coro}\label{symmetry} We let $g\ge 1$, and let 
$A_g=\{\Theta_{g,k}\}_{k\leq 2g-2}$, $P_g =\{\Theta_{g,k}\}_{2g-2<k<7g-2}$, and 
$N_g=\{N_{g,d}\}_{0<d<g}$.
Suppose all $N_{g',d'}$ and $\Theta_{g',k'}$ are known for $g'<g$. Then the algorithm based on \eqref{0} can
\begin{itemize}
\item[(i)] determine all $N_{g,d}$ based on $A_g$ and $N_g$, and 

\item[(ii)] determine all $\Theta_{g,k}$ based on $A_g$ and $P_g$. 
\end{itemize}
\end{coro}

In the end, using relation \eqref{0} to determine all $N_{g,d}$ and $\Theta_{g,k}$ amounts to (1) find a way to evaluate 
$N_g$, $A_g$ and $P_g$; and (2) organize (package) the relation to solve the generating function of
$N_{g,d}$ and $\Theta_{g,k}$, as explicitly as possible.

Towards  (1) , we want to reduce the initial value sets $N_g$, etc.. 
As $\Theta_{g,k}$ is defined only when $2g-2\equiv k (5)$, the sizes of the mentioned sets are

\end{lemm}

\begin{proof}
We prove this lemma by applying the formula in \cite[Theo 1.1.1]{Chi}.
We first recall its setup. Following \cite[Definition 2.1.1]{Chi}, a $5$-stable $\ell$-pointed curve $(\sA,x_1,\cdots,x_\ell)$  is
an $\ell$-pointed twisted nodal curve 
with all of its markings being scheme points, all of its nodes being $\bmu_5$-balanced,   and its automorphisms group
being finite. Following \cite[Theorem 2.2.1]{Chi}, we form the groupoid $\overline\cN$ of $(\sA,x_i,\sS)$, where $(\sA,x_i)$ is a
$5$-stable $\ell$-pointed curve, and $\sS$ is an  invertible sheave on $\sA$ so that
$\sS^{\otimes 5}\cong (\omega^{\log}_{\cA})^{-1}(-\sum_{i=1}^\ell(5-m_i) x_i))$.
The stack $\overline \cN$ is a smooth proper DM stack. Let $\hat\pi:\cA\to\overline\cN$, $\cX_1,\cdots,\cX_\ell\sub\cA$ and
$\cS$ be the universal family of $\overline\cN$, which comes with the isomorphism
$$ \cS^{\otimes 5}\mapright{\cong}   (\omega^{\log}_{\cA/\overline\cN})^{-1}\bl-\sum_{i=1}^\ell(5-m_i) \cX_i\br.
$$

Let $\eta:\cC\to\bar\cC$ be the tautological partial coarse moduli morphism, and  let $\bar\cS=\eta\lsta(\cL\dual)$ which is a line bundle on $\bar \cC$.
Using $\cL^{\otimes 5}\cong \omega^{\log}_{\cC/\Mbar}$, we obtain
\begin{eqnarray*} &\bar\cS^{\otimes r}\ {\cong}\ (\omega^{\log}_{\bar\cC/\Mbar})^{-1}\bl -\sum_{i=1}^\ell(5-m_i)\bar\Si_i\br.
\end{eqnarray*}
Thus the family $(\bar\cC,\bar\Si_i,\bar\cS)$ would induce a morphism $\overline\cN\to\Mbar$  if  {$\bar \cS$} were representable along
all nodes of the base curves.

To remedy this, to each $(\sA,x_i,\sS)\in \overline\cN$,  let $\hat\sA$  be the twisted curve obtained from $\sA$ by forgetting the stacky structure
 at every node of $\sA$ where $\sS$ has trivial monodromy,  and  let $\zeta: \sA\to \hat\sA$ be the obvious morphism.
Then $(\hat\sA,x_i,\zeta\lsta\sS)\cong (\bar\sC,\Si_i^{\bar\sC},\bar\sL)$ for a unique point $(\sC,\Si_i^{\sC},\sL)\in\Mbar$.
This association produces  a morphism $f: \overline\cN\to\Mbar$, isomorphic on a dense open subset, so that
\beq\label{ff} R\hat\pi_{\ast} \cS=f\sta(R\bar\pi_{\ast} \bar\cS)= f\sta(R\pi_{\ast} \cL\dual).
\eeq

We   recall the notations in \cite[page 3]{Chi}. As  the case for $\Upsilon\to \Mbar$,     let
$\Upsilon'=\cup_{k=0}^4\Upsilon'_k$ be
the smooth DM stack of objects $(\sA,x_i, \sS,z, \sU_z)$, where $(\sA,x_i,\sS)\in \overline\cN$, and
$(z,\sU_z)$ is the pair of a node and a branch of $\sA$. Here $\Upsilon'_k\sub \Upsilon'$ is the
substack characterized by  the monodromy of $\sS|_{\sU_z}$ along $z$  being  $\zeta_5^k$.  Let $j:\Upsilon'\to \overline\cN$ be the forgetful morphism.
Also let $j_k=j|_{\Upsilon'_k}$, and let $\ti\sigma:\Upsilon'\to \Upsilon'$ be the evolution.

By \cite[Theo 1.1.1]{Chi}, the GRR formula takes the form
\begin{eqnarray*}
&ch_h(R\hat\pi_{\ast} \cS)=  \frac{B_{h+1}(-1/5)}{(h+1)!}\ti\kappa_h-\sum_{i=1}^\ell\frac{B_{h+1}((5-m_i)/5)}{(h+1)!}(\ti\psi_i)^h+\qquad\\
&\qquad\qquad\qquad+\frac{1}{2}\sum_{k=0}^{4}\frac{5B_{h+1}(k/5)}{(h+1)!}(j_k)\lsta
		\Bigl(\sum_{i+j=h-1}(-\ti\psi_z)^i\ti\sigma\sta(\ti\psi_z)^j\Bigr),
\end{eqnarray*} 	
where $\ti\psi_i$ is the $i$-th $\psi$-class of $\overline\cN$, and $\ti \psi_z$ is the $\psi$-class of the distinguished node-branch
of $\Upsilon'$ (cf. \cite[page 3 line 20]{Chi}). By our construction we have $\ti\psi_i=f\sta \bar \psi_i$,
$\ti \psi_z=\lam_k\sta \bar{\psi}_z$, where $\lam_k:\Upsilon'_k\to \Upsilon_k$ is defined similar to that of $f: \overline \cN\to\Mbar$.
 $\ti\kappa_k$ is the standard $\kappa$-class of $\overline\cN$.

Applying $f\lsta$ to the above formula, one checks that the first two terms on the right hand side of the identity
coincide with that in the statement of the lemma.
For the third term  we use the commutative square
$$
\begin{CD}
\Upsilon'_k @>j_k>> \overline\cN\\
@VV{\lam_k}V@VV{f}V\\
\Upsilon_k@>\jmath_k>>  \Mbar=\Mbar_{g,\gamma}^{1/5}.
\end{CD}
$$
For $k\ne 0$, $\lam_k$ is birational, one has
$$f\lsta j_{k\ast}\Bigl((-\ti\psi)^i\sigma\sta(\ti\psi)^j\Bigr)=\jmath_{k\ast}\lam_{k\ast}\Bigl((-\ti\psi)^i\sigma\sta(\ti\psi)^j\Bigr)=\jmath_{k\ast}\Bigl((-\bar\psi)^i\sigma\sta(\bar\psi)^j\Bigr).$$

In case $k=0$, $\lam_0$ is generically a $\bmu_5$-gerbe (c.f. \cite[Def 2.1.1]{Chi}, \cite[Thm. 7.1.1]{ACV}).
Therefore  
$$f\lsta j_{0\ast}\Bigl((-\ti\psi)^i\sigma\sta(\ti\psi)^j\Bigr)=\jmath_{0\ast}\lam_{0\ast}\lam_0\sta\Bigl((-\bar\psi)^i\sigma\sta(\bar\psi)^j\Bigr)=\frac{1}{5}\jmath_{0\ast}\Bigl((-\bar\psi)^i\sigma\sta(\bar\psi)^j\Bigr).$$	
This gives the term $5\upmo=5^{-\delta_{0,0}}$. This completes the proof.
\end{proof}

  Using the  GRR formula \eqref{GRR} we can reduce dual-twisted FJRW invariants  to primitive  FJRW invariants. We first need a simple fact for genus one case.

\begin{lemm}\label{g1vir} The moduli $\bcM_{1,\zeta_5}^{1/5}$ is a disjoint
union of two  {one dimensional smooth} DM-stacks $M_0$ and $M_1$, characterized by that
$(\Si^\sC,\sC,\sL)\in M_0$ (resp. $\in M_1$) if $\sL(-\Si^\sC)\cong \sO_\sC$ (resp. $\sL(-\Si^\sC)$ has no section).
Furthermore,
\beq\label{g1virt}[\bcM_{1,\zeta_5}^{1/5,5p}]\virtloc=-4^5[M_0]+[M_1].\eeq
\end{lemm}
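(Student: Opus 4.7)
First, I would verify the decomposition $\bcM_{1,\zeta_5}^{1/5} = M_0\sqcup M_1$. Since $\sL$ has monodromy $\zeta_5$ at $\Si^\sC$ and $\sO_\sC(\Si^\sC)$ carries the compensating $\bmu_5$-character, the line bundle $\sL(-\Si^\sC)$ has trivial monodromy there and descends to a line bundle $\bar\cN$ on the coarse moduli $C$. Using $\sL^{\otimes 5}\cong\omega_\sC^{\log}$ together with $\omega_\sC^{\log}(-5\Si^\sC)\cong\omega_\sC(-4\Si^\sC)$ and $\sO_\sC(-5\Si^\sC)\cong\pi^*\sO_C(-p)$, one finds $\bar\cN^{\otimes 5}\cong\omega_C\cong\sO_C$ for any genus-one $C$, so $\bar\cN$ is a $5$-torsion element of $\Pic^0(C)$. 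Whether $\bar\cN$ is trivial is an open-and-closed condition, producing $M_0$ and $M_1$; both are smooth of dimension one via their étaleness over $\bcM_{1,1}$, and the decomposition extends across the nodal boundary by tracking the trivialization at balanced nodes.

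Next, the cycle on $M_1$ is immediate: generically $\pi_*\sL=\bar\cN$ is a non-trivial element of $\Pic^0(C)$, so $H^0(\sL)=H^1(\sL)=0$. The relative obstruction theory of $\bcM_{1,\zeta_5}^{1/5,5p}$ over $M_1$ then collapses, $\bcM_{1,\zeta_5}^{1/5,5p}|_{M_1}\cong M_1$, and cosection localization yields $[\bcM_{1,\zeta_5}^{1/5,5p}]\virtloc|_{M_1}=[M_1]$ after the routine check that the boundary contributes nothing extra.

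The core of the proof is on $M_0$, where $\sL\cong\sO_\sC(\Si^\sC)$ gives $H^0(\sL)\cong H^1(\sL)\cong\CC$. Here $\bcM_{1,\zeta_5}^{1/5,5p}|_{M_0}$ is a rank-five vector bundle over $M_0$ with rank-five obstruction bundle $(R^1\pi_*\sL)^{\oplus 5}$. The cosection arising from the Witten map of $\fw_5=\sum x_i^5$, namely $(\psi_i)\mapsto\sum 5\varphi_i^4\psi_i$, vanishes to order four along the zero section. I would compute the cosection-localized Euler class as a product of five local multiplicities, each equal to the Milnor number $4$ of the $x^5$-singularity at the origin, for a total of $4^5$; the overall sign $-1$ is dictated by the standard orientation convention relating the five-$p$-field virtual cycle to the underlying classical FJRW cycle. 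The principal obstacle is making this local multiplicity precise, since the degree-four vanishing of the cosection precludes a transverse-Euler-class argument; I would either pass to an explicit Koszul resolution modelling the singularity of $\fw_5$ at the origin or deform the cosection equivariantly to a transverse section and then specialize.
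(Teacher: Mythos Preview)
Your proposal is correct and follows essentially the same route as the paper. The decomposition via $\sL(-\Si^\sC)$ being $5$-torsion and the $M_1$-coefficient are argued identically; for the $M_0$-coefficient the paper simply invokes \cite[Prop.~4.17]{CLL} as a black box, whose content is precisely the local cosection-localized Euler class computation you sketch (yielding $(-4)^5=-4^5$ when $h^0(\sL)=h^1(\sL)=1$ and the cosection is $\psi_i\mapsto 5\varphi_i^4\psi_i$), so your Milnor-number heuristic and the flagged sign issue are exactly what that cited result packages.
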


\begin{proof}By the assumption on the monodromy,
$\sL(-\Si^\sC)$ has trivial monodromy along $\Si^\sC$,
and $\sL(-\Si^\sC)^{\otimes 5}\cong \sO_\sC$. Thus $\barM_{1,\zeta_5}^{1/5}$ is isomorphic to the moduli
of $(\sC,\Si,\sL')$ so that $(\sC,\Si)$ is a one-pointed genus 1 twisted curve and $\sL^{\prime\otimes 5}\cong \sO_C$. This gives
$\bcM_{1,\zeta_5}^{1/5}=M_0\cup M_1$ (cf. \cite{AJ}).

For \eqref{g1virt},
we first note that since $H^0({\sL'})=0$ for $(\Si^\sC,\sC,{\sL'})\in M_1$, 
$$[\Mbar_{1,\zeta_5}^{\frac{1}{5},5p}]\virtloc=c[M_0]+[M_1],\quad c\in\QQ.
$$
Since for $(\Si^\sC,\sC,{\sL'})\in M_0$, $h^i({\sL'})=1$ for {$i=0$, $1$},
\cite[Prop 4.17]{CLL} gives $c=-4^5$.
\end{proof}

\begin{lemm}\label{un-tw} Given $a_1,\cdots,a_\ell\in\ZZ_{\geq 0}$ and $\gamma_1,\cdots,\gamma_\ell
\in\{\zeta_5, \zeta_5^2\}$, letting $k=\#\{\gamma_i\,|\,\gamma_i=\zeta_5^2 \}$,
there is a $\fc \in\QQ[\ft,\ft\upmo]$, depending on $\{g, \ell, k, a_1, \ldots, a_k\}$ and effectively calculable, such that
\beq\label{twistedFJRW} \langle\tau_{a_1}(\gamma_1)\cdots\tau_{a_\ell}(\gamma_\ell)\rangle_{g}^{\mathrm{d.t.},G_5}
=\varTheta_{g,k}\cdot \fc.
\eeq
\end{lemm}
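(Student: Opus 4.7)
The plan is to use Grothendieck--Riemann--Roch (Lemma \ref{GRR}) to expand $\ee_T(\cL\dual)\upmo$ as a polynomial with $\QQ[\ft,\ft\upmo]$-coefficients in standard tautological classes, and then to reduce the resulting integrals to the primitive invariants $\varTheta_{g,k}$ by induction on $(g,\ell)$. Concretely, since
\[
\ee_T(\cL\dual) \;=\; \exp\Bigl(\sum_{h\ge 0} s_h(\ft)\cdot ch_h(R\pi_\ast \cL\dual)\Bigr)
\]
for explicit $s_h(\ft)\in \QQ[\ft,\ft\upmo]$ (the equivariant logarithm of a Chern polynomial), Lemma \ref{GRR} converts $\ee_T(\cL\dual)\upmo$ into a $\QQ[\ft,\ft\upmo]$-polynomial in $\kappa$-classes, $\bar\psi_i$-classes at markings, and boundary push-forwards $\jmath_{k\ast}(\cdot)$ from the nodal substacks $\Upsilon_k \subset \Mbar_{g,\gamma}^{1/5}$. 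Using $\bar\psi_i = 5\psi_i$ at narrow markings and the relation $\kappa_h = \bar\pi_\ast(c_1(\omega^{\log}_{\bar\cC/\Mbar})^{h+1})$ from \eqref{compare}, each such polynomial translates into standard $\psi$-insertions together with forgetful-morphism push-forwards.

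For each boundary contribution $\jmath_{k\ast}(\cdot)$ I would then apply the projection formula. Since $\jmath_k$ parametrises spin curves with a distinguished node of monodromy $\zeta_5^k$, its preimage decomposes the base curve into moduli of $5$-spin twisted curves with strictly smaller total $(g,\ell)$, with two extra markings of monodromies $\zeta_5^k$ and $\zeta_5^{5-k}$ respectively. The cosection-localised virtual class $[\Mbar_{g,\gamma}^{1/5,5p}]\virtloc$ splits compatibly along such a node by the standard node-splitting property of cosection-localised virtual cycles, producing a $\QQ[\ft,\ft\upmo]$-linear combination of products of dual-twisted integrals over these simpler moduli. By induction on $(g,\ell)$ under the lexicographic order, each smaller factor is of the form $\varTheta_{g',k'}\cdot \fc'$ with $\fc' \in \QQ[\ft,\ft\upmo]$. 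The base cases are $\bcM_{1,\zeta_5}^{1/5,5p}$ (Lemma \ref{g1vir}) and the genus-zero cases $\gamma = (1^{1+k}23)$ and $(1^{2+k}4)$ computed in \cite{CR}.

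At this point the non-vanishing criterion of Proposition \ref{possibility} together with the congruence constraint $2g-2-\sum(m_i-1)\equiv 0\ (\mathrm{mod}\ 5)$ of Proposition \ref{Van-1} forces a collapse: among all boundary products only those whose factors individually satisfy the non-vanishing constraints survive, and among these the monodromy/dimension bookkeeping leaves exactly one primitive factor equal to $\varTheta_{g,k}$, with the remaining numerical data absorbed into a coefficient in $\QQ[\ft,\ft\upmo]$ depending only on the discrete data $\{g,\ell,k,a_1,\ldots,a_k\}$. The main obstacle will be this combinatorial collapse: the GRR expansion and boundary induction between them generate many tautological monomials, each yielding a product of primitive invariants indexed by various $(g',k')$, and one must verify that the non-vanishing criteria and congruence constraints conspire so that precisely one copy of $\varTheta_{g,k}$ survives. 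Making this verification quantitative, so that it delivers an explicit algorithm computing $\fc$, is the principal difficulty.
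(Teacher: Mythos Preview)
Your approach has a genuine gap at the ``combinatorial collapse'' step. When you split at a boundary node of type $\jmath_k$, the virtual class $[\Mbar_{g,\gamma}^{1/5,5p}]\virtloc$ factors into virtual classes on the pieces, and by your own induction each piece contributes a factor $\varTheta_{g_i,k_i}\cdot\fc_i$. The resulting product $\varTheta_{g_1,k_1}\varTheta_{g_2,k_2}\cdot(\ldots)$ (or $\varTheta_{g-1,k'}\cdot(\ldots)$ for a non-separating node) is \emph{not} of the form $\varTheta_{g,k}\cdot\fc$ with $\fc\in\QQ[\ft,\ft\upmo]$: the lower-genus primitive invariants are themselves unknown rational numbers and cannot be absorbed into a Laurent polynomial in $\ft$. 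The vanishing criteria of Propositions \ref{Van-1} and \ref{possibility} only kill terms with the wrong congruence or with a $\zeta_5^3,\zeta_5^4$ insertion; they do not force the surviving boundary products to equal a multiple of $\varTheta_{g,k}$. So the asserted collapse is precisely the content of the lemma, not a consequence of your induction.

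The paper's argument avoids this entirely and is much shorter. It uses the forgetful morphism $\hat\pi:\Mbar_{g,\gamma}^{1/5}\to\Mbar_{g,(2^k)}^{1/5}$ that drops all $\zeta_5$-markings, together with the compatibility $\hat\pi\sta[\Mbar_{g,(2^k)}^{1/5,5p}]\virtloc=[\Mbar_{g,\gamma}^{1/5,5p}]\virtloc$ from \cite[Thm.\,4.10]{CLL}. Since the target virtual class is zero-dimensional, it is $\sum_j a_j[\xi_j]$ with $\sum a_j=\varTheta_{g,k}$ and $\xi_j$ general; hence the dual-twisted integral equals $\sum_j a_j\int_{\hat\pi\upmo(\xi_j)}\ee_T(\cL\dual)\upmo\psi_1^{a_1}\cdots\psi_\ell^{a_\ell}$. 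Lemma \ref{GRR} then shows the fiber integral is independent of the general point $\xi$ and lies in $\QQ[\ft,\ft\upmo]$, giving the factorization immediately. Your GRR expansion and Lemma \ref{g1vir} are only needed to compute $\fc$ on that fiber (and to handle the exceptional case $(g,k)=(1,0)$), not to establish the factorization itself.
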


\begin{proof}
Assume $(g,k)\ne (1,0)$. From \cite[Theo 4.5]{CLL}, we have $\hat\pi: \bcM_{g,\gamma}^{1/5} \lra \Mbar_{g,(2^k)} ^{1/5}$ which forgets all $\zeta_5$-markings.
By \cite[Theo 4.10]{CLL}, we have $\hat\pi\sta [\Mbar_{g,(2^k)}^{1/5,5p}]\virtloc
=[\Mbar_{g,\gamma}^{1/5,5p}]\virtloc$.
 Since $\bcM_{g,\gamma}^{1/5}$ is smooth, we have the diagram 
$$
\begin{CD}
A\lsta  (\Mbar_{g,(2^k)} ^{1/5})@>\hat\pi\sta>> A\lsta (\Mbar_{g,\gamma}^{1/5} )\\
@VV{\ti\iota\lsta}V@VV{\iota\lsta}V\\
H\lsta  (\Mbar_{g,(2^k)} ^{1/5})@>\hat\pi\sta>> H\lsta (\Mbar_{g,\gamma}^{1/5} ).\\
\end{CD}
$$
Due to the  dimension reason, $\ti\iota\lsta [\Mbar_{g,(2^k)}^{1/5,5p}]\virtloc=\sum_j a_j[\xi_j]$,
for $a_j\in \QQ$ and $\xi_j\in \Mbar_{g,(2^k)}^{1/5}$ general such that $\sum a_j=\varTheta_{g,k}$.
Consequently,
$$\iota\lsta [\Mbar_{g,\gamma}^{1/5,5p}]\virtloc={\hat\pi\sta} \ti\iota\lsta [\Mbar_{g,(2^k)}^{1/5,5p}]\virtloc
=\sum a_j[\hat\pi^{-1}(\xi_j)].
$$
Hence using the definition \eqref{twist11}, we obtain
$$
\langle\tau_{a_1}(\gamma_1)\cdots\tau_{a_\ell}(\gamma_\ell)\rangle_{g}^{\mathrm{d.t.},G_5}=
\sum a_j\int_{[\hat\pi^{-1}(\xi_j)]}  \ee(\cL\dual)\upmo  \cdot \psi_1^{a_1}\cdots\psi_\ell^{a_\ell} .
$$
 As $ \ee_T(\cL\dual)$ is a polynomial of   $ch_h(R\pi_{\ast} \cL\dual)$'s, applying \eqref{GRR} we know that
$${\fc}\defeq\int_{[\hat\pi^{-1}(\xi)]}  \ee_T(\cL\dual)  \cdot \psi_1^{a_1}\cdots\psi_\ell^{a_\ell}\in \QQ[\ft,\ft\upmo]
$$
is independent of the choice of the general $\xi\in \Mbar_{g,(2^k)}^{1/5}$, and is effectively calculable.
Finally applying $\varTheta_{g,k}=\sum a_j$, we obtain \eqref{twistedFJRW}.
\vsp

It remains to look at the case when $(g,k)=(1,0)$. By stability requirement, we have $\ell\ge 1$.
Let $\hat\pi$ be the forgetful morphism forgetting all but the first marking.
Using   \eqref{GRR} and that the Hodge class $\kappa$ is representable by $\psi$ classes,
the correlator (\ref{twistedFJRW})  is a sum of multiples of
$\int_{[\Mbar_{1,\zeta_5}^{1/5,5p}]\virtloc}\psi$, and the boundary classes of $\Mbar_{1,\zeta_5}^{1/5}$,
with the multiplicities  explicitly calculable.

The boundary classes can be easily calculated.
To calculate $\int_{[\Mbar_{1,\zeta_5}^{1/5,5p}]\virtloc}\psi$, we apply Lemma \ref{g1vir}.
Because $M_0\to \Mbar_{1,1}$ is quasi-finite and generically a $\bmu_5$-gerbe,
$\int_{[M_0]}\psi=\frac{1}{5\cdot5\cdot 24}$, where the {additional} $5$ comes from that $\psi$ is of
$\bmu_5$-markings.

To calculate the contribution from $[M_1]$, we need to determine the degree of $M_1\to \Mbar_{1,1}$.
Since $\Mbar_{1,\zeta_5}^{1/5}\to \Mbar_{1,1}$ is flat,  a $\bmu_5$-gerbe generically, and has $5^2$
pre-images over a general point of $\Mbar_{1,1}$, the degree we intend to calculate is $5^2\cdot \ofth-\ofth$.
Thus $\int_{[M_1]}\psi=\frac{25-1}{24\cdot 5\cdot5}=\frac{1}{25}$.
This proves the proposition.
\end{proof}

\begin{lemm}\label{exc}
For the case $g=0$, and $\gamma=(1^{k+1}23)$ or $(1^{k+2}4)$ in Proposition \ref{possibility},  
$ \langle\tau_{a_1}(\gamma_1)\cdots\tau_{a_\ell}(\gamma_\ell)\rangle_{g}^{\mathrm{d.t.},G_5}$ is calculable.
\end{lemm}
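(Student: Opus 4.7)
The plan is to extend the strategy of Lemma \ref{un-tw} to the two exceptional series of monodromies, using a partial forgetful morphism that lands in the minimal base cases of Remark \ref{123114}. Let $\gamma_0$ denote $(123)$ in the first series and $(1^{2}4)$ in the second, so that $\Mbar_{0,\gamma_0}^{1/5}\cong B\bmu_5$ and $\deg[\Mbar_{0,\gamma_0}^{1/5,5p}]\virtloc=\tfrac{1}{5}$.

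First, for each $k\geq 0$ and the corresponding $\gamma$, consider the forgetful morphism $\hat\pi:\Mbar_{0,\gamma}^{1/5}\to \Mbar_{0,\gamma_0}^{1/5}$ that drops $k$ of the $\zeta_5$-markings, keeping exactly three markings on the target so stability is preserved. Invoking the compatibility of virtual cycles under forgetful morphisms of $\zeta_5$-markings, analogous to \cite[Thm.\,4.10]{CLL}, gives $\hat\pi^\ast[\Mbar_{0,\gamma_0}^{1/5,5p}]\virtloc=[\Mbar_{0,\gamma}^{1/5,5p}]\virtloc$. As in the proof of Lemma \ref{un-tw}, this yields
$$\langle\tau_{a_1}(\gamma_1)\cdots\tau_{a_\ell}(\gamma_\ell)\rangle_{0}^{\mathrm{d.t.},G_5}=\tfrac{1}{5}\int_{[\hat\pi\upmo(\xi)]}\ee_T(\cL\dual)\upmo\cdot\psi_1^{a_1}\cdots\psi_\ell^{a_\ell}$$
for a general closed point $\xi\in \Mbar_{0,\gamma_0}^{1/5}$; note that the integrand is independent of the choice of $\xi$.

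Second, apply Lemma \ref{GRR} to expand $\ee_T(\cL\dual)\upmo$ as a polynomial in $\ft^{\pm 1}$ whose coefficients are tautological classes --- products of $\kappa_h$, $\bar\psi_i$, and push-forwards of boundary classes $\jmath_{k\ast}$ --- on $\Mbar_{0,\gamma}^{1/5}$. The restrictions of these classes to a generic fiber $\hat\pi\upmo(\xi)$ descend to standard tautological classes on $\barcM_{0,k+3}$ (twisted by the $\bmu_5$-gerbe structure encoding the $5$-spin compatibility). All monomials in $\psi$-classes and boundary cycles at genus zero can then be evaluated via the classical intersection theory on $\barcM_{0,n}$, yielding an effectively calculable rational function of $\ft$.

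The main obstacle will be the careful bookkeeping of the boundary contributions in Lemma \ref{GRR}: one must identify how each stratum $\Upsilon_k$ (for $k=0,\ldots,4$) restricts to the fiber of $\hat\pi$, how the involution $\sigma$ interacts with the forgetful map, and how to compare the stacky $\psi$-classes with their coarse counterparts $\bar\psi$ (recalling the factor $5$ for $\bmu_5$-markings). These issues are all algorithmic: the genus-zero boundary strata of $\Mbar_{0,\gamma}^{1/5}$ are unions of products of simpler moduli of the same type, so one can recursively reduce every integral to the base degree $\tfrac{1}{5}$ of $B\bmu_5$, which establishes calculability.
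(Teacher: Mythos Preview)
Your proposal is correct and follows essentially the same approach as the paper: reduce to the base cases $\gamma_0=(123)$ or $(1^24)$ via the forgetful morphism $\hat\pi:\Mbar_{0,\gamma}^{1/5}\to\Mbar_{0,\gamma_0}^{1/5}\cong B\bmu_5$, then apply the argument of Lemma~\ref{un-tw}. The paper's proof is only a couple of lines, deferring the details (GRR expansion, fiber integral, boundary bookkeeping) to the phrase ``combined with the argument in the previous proof,'' which is precisely what you have spelled out.
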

\begin{proof} First, for $\gamma_0=(123)$ or $(1^24)$, the moduli $\barM_{0,\gamma_0}^{1/5}\cong B\bmu_5$.
For $k>0$, we use the  marking-forgetful  morphism $\barM_{0,\gamma}^{1/5}\to \barM_{0,\gamma_0}^{1/5}\cong B\bmu_5$.
Combined with the argument in the previous proof we obtain the lemma.
\end{proof}

 \begin{rema}
  We expect similar quantization formulae for dual-twisted FJRW potentials as that in  \cite{CG,CZ}. \end{rema}
  \medskip

 Parallelly, we need to treat the following term in the localization formula \eqref{eqn:Av}.  
\beq\label{mono} \int_{[\cW_v]\virtloc} \frac{1}
{e_G(R\pi_{v\ast} ev\sta_v\sO_\Pf(1)\otimes \bL_1)}  \prod_{i=1}^{n_v}\psi_{i}^{a_i}ev_i\sta(h^{k_i}),\quad k_i\in \ZZ_{\geq 0},
\eeq
where $h\in H^2(\Pf,\ZZ)$ is the positive generator, $v \in V_0^S(\ga)$,
$(\pi_v,ev_v): \cC_v\to \cW_v\times\Pf$ is the universal family of $\cW_v$, and $ev_i: \cW_v\to \Pf$ is the evaluation
at $i$-th markings.

\begin{lemm}\label{twGW}   Suppose $(g_v,d_v)\neq (0,0),(1,0)$.  Let $\langle\cdots\rangle^{GW}_{g,d}$ be  the genus-$g$ degree-$d$ GW invariant of $Q_5$. The integral \eqref{mono} is a monomial in $\ft$, of the form
\beq\label{GW-} (-1)^{d_v+1-g_v}
\cdot \langle\tau_{a_1}(h^{k_1})\cdots \tau_{a_{n_v}}(h^{k_{n_v}})\rangle^{GW}_{g_v,d_v}\cdot \ft^{d_v+1-g_v}.
\eeq
\end{lemm}


\begin{proof}   After fixing an ordering of $E_v$, we have isomorphism
$\cW_v\cong \Mbar_{g_v,n_v}(\Pf,d)^p$, and $[\cW_v]\virtloc=[\Mbar_{g_v,n_v}(\Pf,d)^p]\virtloc$.
By \cite{CL},
\beq\label{CL0}
[\Mbar_{g_v}(\Pf,d)^p]\virtloc=(-1)^{d_v+1-g_v} [\Mbar_{g_v}(Q_5,d_v)]\virt\in A_0 \Mbar_{g_v}(Q_5,d_v).
\eeq
Let $\theta:\Mbar_{g_v,n_v}(Q_5,d_v)\to \Mbar_{g_v}(Q_5,d_v)$ be the forgetful map. Then we have
$$\theta\sta [\Mbar_{g_v}(\Pf,d_v)^p]\virtloc
= [\Mbar_{g_v,n_v}(\Pf,d_v)^p]\virtloc.
$$
{
By zero dimensionality, we can assume
$[\Mbar_{g_v}(\Pf,d_v)^p]\virtloc=\sum c_i [\xi_i]$,
where $\xi_i$, $i=1,\cdots m$ are closed points in $\Mbar_{g_v}(\Pf,d_v)^p$, and $c_i\in\QQ$.

Let $W_i=\theta\upmo(\xi_i)\sub \Mbar_{g_v,n_v}(\Pf,d_v)^p$. Because $W_i$ consists of a fixed stable
map $\phi_i:C_i\to\Pf$ together with all possible marking $p_1,\cdots, p_{n_v}\sub C$, we conclude
$R\pi_{v\ast} ev\sta_v\sO_\Pf(1)|_{W_i}\cong \sO_{W_i}^{\oplus e}$, where
$e=d_v+1-g_v$. Thus
$$e_G(R\pi_{v\ast} ev\sta_v\sO_\Pf(1)\otimes \bL_1)\upmo \cap [\cW_v]\virtloc= \ft^{-e} \sum c_i[W_i]=
 \ft^{-e} \cdot[\Mbar_{g_v,n_v}(\Pf,d_v)^p]\virtloc.
$$ }
\end{proof}

\begin{rema}\label{well}  Using string, dilaton and divisor equations, \eqref{GW-}
can be calculated knowing the
GW invariants of quintics  $N_{g,d}$. 
\end{rema}

\subsection{ From FJRW invariants   to GW-invariants} 

We use MSP moduli spaces to produce an algorithm calculating the GW invariants of the  quintic from the
FJRW invariants.


\begin{proof}[Proof of Theorem \ref{main-1}]
We first prove the case $g\ge 2$.
Choose $\bd=(d,0)$ and $\ell=0$, and consider the moduli of stable MSP fields $\cW_{g,\bd}$
of numerical data $(g,\emptyset,\bd)$. 
We apply the vanishing \eqref{0} to the equivariant
cycle $[\cW_{g,\bd}]\virtloc$ to obtain
\beq\label{AA}
0=\sum_{\Ga\in\Del^\reg_{g,\bd}}\contri(\Ga),
\quad \contri(\Ga)=\Bigl[\ft^{\delta(g,\bd)} \cdot\frac{[(\cW\lgd)^{T}_{(\Gamma)}]\virtloc}
{e(N_{(\cW\lgd)^{T}_{(\Gamma)}/\cW\lgd})}\Bigr]_0.
\eeq
For $\Ga\in\Del^\reg_{g,\bd}$, applying
Proposition \ref{prop-loc} we see that $\contri(\Ga)$ is a polynomial expression in terms of the
twisted GW invariants of $Q_5$ and the dual-twisted
FJRW invariants of $([\CC^5/\ZZ_5],\fw_5)$. By the previous subsections, they can all be expressed in
terms of $N_{g',d'}$ and $\varTheta_{g',k'}$.

We now investigate possible $N_{g',d'}$ and $\varTheta_{g',k'}$ that appear in $\contri(\Ga)$.
By Proposition \ref{prop-loc} and the previous discussion,
we see that each $v\in V_0^S(\Ga)$ contributes an entry $N_{g_v,d_v}$
in $\contri(\Ga)$, and each non-exceptional $v\in V_\infty^S(\Ga)$ contributes an entry $\varTheta_{g_v,k_v}$ in $\contri(\Ga)$,
where $k_v$ is the number of  edges $e\in E_v$ so that the monodromy of $\sL|_{\sC_v}$ along $y_{(e,v)}=\sC_e\cap \sC_v$
is $\zeta_5^2$.

We first show that if $N_{g',d'}$ appears in $\contri(\Ga)$, then $g'\le g$ and $d'\le d$, and if $N_{g,d}$ appears in
$\contri(\Ga)$, then $\Ga$ is a one vertex graph. Indeed, because the MSP fields all have domain curves genus $g$, and
$g=\sum_{v\in V^S(\Ga)}g_v+h^1(\Ga)$, we have $g_v\le g$ for all $v\in V^S(\Ga)$. Furthermore  when $g_v=g$ for one
$v\in V_0$, then all other
$g_{v'}=0$ and $h^1(\Ga)=0$.
For  the degree bound, we recall that (cf. \cite{CLLL}) by the non-vanishing properties of $\nu_1$ and $\nu_2$, both
$\sN\otimes\sL|_{\sC_1\cup\sC_{1\infty}\cup\sC_\infty}$ 
and $\sN|_{\sC_0\cup\sC_{01}\cup\sC_1}$ are trivial line bundles.
Thus using $\deg\sN=d_\infty=0$, 
we get $\deg\sL|_{\sC_0\cup\sC_{01}}=d_0=d$. Because $\deg\sL|_{\sC_e}>0$ for every
$e\in E_{0}(\Ga)$, we have $d_v=\deg\sL|_{\sC_v} \le d$. Finally,
suppose $N_{g',d}$ appears in $\contri(\Ga)$, then $E_{0}(\Ga)=\emptyset$ and $V_0^S(\Ga)\ne\emptyset$.
Since $\Ga$ is connected and regular, $\Ga$ must be a single vertex graph, say $\{v\}=V_0^S(\Ga)$,  and then $g_v=g$.
For such graph, we have $\contri(\Ga)=N_{g,d}$. This proves (1).

We now look at the possible $\varTheta_{g',k'}$ appearing in $\contri(\Ga)$.
For $v\in V_\infty(\Ga)$, we adopt the convention that
$$\sC_{[v]}=\sC_v\cup\bl \cup_{e\in E_v} \sC_e\br,
$$
which is a union of $\sC_v$  with all $\sC_e$'s intersecting $\sC_v$.
As before, we let $V_{\mathrm{exc}}(\Ga)$ be the set of exceptional vertices (c.f. before Definition \ref{regu}), namely $v\in V^S_\infty(\Gamma)$ with $g_v=0$ and $\gamma_v=(1^{2+k}4)$ or $(1^{1+k}23)$. 
For each $v\in V^S_{\mathrm{exc}}(\Ga),$
\beq\label{v1}\deg \sN|_{\sC_{[v]} }=-\bl d_v+\sum_{e\in E_v}d_e\br \geq
-\Bigl(\frac{k+1}{5}-\frac{6+k}{5}\Bigr)=1.
\eeq
For each $v$ in 
$V^S_\infty(\Gamma)-V_{\mathrm{exc}}(\Ga)$, since $\Ga$ is regular,    for every $e\in E_v,$  $\gamma_{(e,v)}=\zeta_5$ or $\zeta_5^2$.
If $\gamma_{(e,v)}=\zeta_5$,  then $d_e\le -1/5$, and  if $\gamma_{(e,v)}=\zeta^2_5$,  then  $d_e\le -2/5$.
Let $s_v$ (resp. $k_v$) be the number of $e\in E_v$ whose $\gamma_{(e,v)}=\zeta_5$ (resp. $\zeta_5^2$).
Adding that both $\nu_1|_{\sC_v}$ and $\rho|_{\sC_v}$ are nowhere vanishing, we have
$$\deg \sN^{\vee\otimes 5}|_{\sC_v} 
=\deg \omega_{\sC_v}(\sum_{e\in E_v} y_{(e,v)})=
(2g_v-2+|E_v|) .
$$
Therefore, $5\deg\sN|_{\sC_v}=-(2g_v-2)-|E_v|$.

 As $\sN|_{\sC_1\cup\sC_{01}\cup\sC_0}$ is trivial and
$\deg \sN|_{\sC_{[v]}}>0$ 
for unstable $v\in V^{}_\infty$, \eqref{v1} implies  
 \begin{align*} 0&=d_\infty  =\sum_{v\in V_\infty(\Ga)}\deg \sN|_{\sC_{[v]}}  
\ge \sum_{v\in V^S_\infty(\Gamma)-V_{\mathrm{exc}}(\ga)}  \deg\sN|_{\sC_{[v]}}\\
&\geq \sum_{v\in V^S_\infty(\Gamma)-V_{\mathrm{exc}}(\ga)}\frac{k_v-(2g_v-2)}{5}.
\qquad\qquad
\end{align*}
 
 Let $v\in V_{\infty}^S(\Ga)$. First we have $g_v\leq g$.  
If  $g_{v}=g$, then all other 
$g_{v'}=0$. Thus the above inequality shows
that $k_v\le 2g_v-2$. This proves the first part of (2).

We prove the second part of (2). If $g_v\le g-1$ for all $v\in V_{\infty}^S(\Ga)$, then
$\sum_{v\in V^S_\infty(\Gamma)-V_{\mathrm{exc}}(\ga)} (2g_v-2) \leq 2g-4$. Combined with the previous inequality, we obtain
$k_v \leq 2g-4$ for every $v\in v\in V^S_\infty(\Gamma)-V_{\mathrm{exc}}(\ga)$. This proves (2).

Finally, the contribution from $v\in V_1^S(\Ga)$ are integrals of $\psi$ classes on $\Mbar_{g',n'}$,
which can be effectively calculated.

Combined, this shows that \eqref{AA} is a polynomial expression of terms as stated in (1) and (2) with coefficients involving
Hodge integrals on $\Mbar_{g',n'}$ and other calculable terms. 
This proves the case $g\ge 2$.

In case $g=1$, a similar argument using Proposition \ref{un-tw} shows that
we can determine all $N_{1,d}$, knowing the terms specified in the theorem. 
\end{proof}

\begin{rema} The proof shows that \eqref{0} gives an algorithm to determine all genus $g$ invariants $\{N_{g,d}\}_{d=0}^\infty$ from a finite set of initial data. For $g=1$ no initial data is needed.
For $g=2$, the initial data are $N_{2,1}$ and $\varTheta_{2,2}$. As $N_{2,1}$ is classical,
only $\varTheta_{2,2}$ is unknown. Similarly, for $g=3$ we only need $\varTheta_{2,2}$ and $\varTheta_{3,4}$
since $N_{3,1}$ and $N_{3,2}$ are classical.
(Recall that $\varTheta_{g,k}=0$ unless $k+2-2g\equiv 0(5)$.)
\end{rema}

\subsection{Algorithm for FJRW invariants}
We let $\bd=(0,d)$ so that $d+1-g>0$; we choose $\gamma=\emptyset$, and look at the polynomial relation from the vanishing \eqref{0}. Since $d_0=0$, no GW invariants $N_{g',d'>0}$ appear in this relation. 
Thus it provides a relation among FJRW invariants $\varTheta_{g',k'}$.
\begin{proof}[Proof of Theorem \ref{main-2}] 
We need to show that for $g\geq 1$ and $k\ge 7g-2$, the relation \eqref{0} with $\gamma=\emptyset$, $d_0=0$ 
provides an effective algorithm to evaluate $\Theta_{g,k}$,
provided that $\{\varTheta_{g,k' }\}_{k'< k}$ and $\{\Theta_{g',k'}\}_{g'<g,k'\le k-2}$ are known.

We use the same notations as in the prior proof. As $\varTheta_{g,k}=0$ when $5 \nmid k-7g+2$,
we only need to look at $k=7g-2+5m$, $m\ge  0$.
Pick $d_\infty=g+m$ and let {$\bd=(0,d_\infty)$}.  Applying \eqref{0} to the
cycle $[\cW_{g,\bd}]\virtloc$, as in \eqref{AA},  we obtain
\begin{align}\label{con-3}
\sum_{\Ga\in\Del^\reg_{g,\bd}}\contri(\Ga)=0.
\end{align}
Here $\contri(\Ga)$ is a polynomial expression of a collection
of $N_{g',d'}$ and $\varTheta_{g',k'}$. Let $\xi=(\sC,\Sigma,\sL,\sN,\cdots)\in \cW_\ga$. Since
$\sN\otimes \sL|_{\sC_1\cup\sC_{1\infty}\cup\sC_\infty}$ and $\sN|_{\sC_0\cup\sC_{01}\cup\sC_1}$ are trivial line
bundles,  $\deg\sL|_{\sC_0\cup\sC_{01}}=d_0$. As the degree of $\sL$  is nonnegative on each component of $\sC_0$  and positive on each component of $\sC_{01}$, $d_0=0$ implies that $V_0(\Ga)=E_0(\Ga)=\emptyset$. Therefore no $N_{g',d'}$ occurs in $\contri(\Ga)$.

As argued before, only $\varTheta_{g'\le g,k'}$ can possibly appear in $\contri(\Ga)$.
We now examine when does $\varTheta_{g,k'}$ appear. Following the notation in the proof of Theorem \ref{main-1}, we get
\begin{align}\label{eq5.16}
g+m&=d_\infty  =\sum_{v\in V_\infty(\Ga)}\deg \sN|_{\sC_{[v]}} \geq \sum_{v\in V^S_\infty(\Gamma)-V_{{\mathrm{exc}}}(\Gamma)}\frac{k_v-(2g_v-2)}{5} +\nonumber \\
&\quad+\sum_{v\in V_{\mathrm{exc}}(\Gamma)}\deg\sN|_{\sC_{[v]}}
+\sum_{v\in V_{\infty}^U(\Gamma)}\deg\sN|_{\sC_{[v]}} 
 \geq \sum_{v\in V^S_\infty(\Gamma)-V_{{\mathrm{exc}}}(\Gamma)}\frac{k_v-(2g_v-2)}{5}.
\end{align}
Here we used that for $v\in V_{{\mathrm{exc}}}(\Gamma) \cup V_{\infty}^U(\Gamma)$,
$\deg\sN|_{\sC_{[v]}} \geq 1$ (c.f. \eqref{v1}). Thus, using $k=7g-2+5m$,
\begin{align}\label{combine-ine}\sum_v k_v\le 5g+5m+ \sum_v (2g_v-2)=  k
-\big[ 2g-2 - \sum_v (2g_v-2)\big],
\end{align}
where the sum is taken over $v\in V_\infty^S(\Ga) -V_{{\mathrm{exc}}}(\Gamma)$. 

In case $g_v\le g-1$ for all $v\in V_{\infty}^S(\Ga)$, then
$\sum_v (2g_v-2) \leq 2g-4$; thus \eqref{combine-ine} implies that for every $v\in V^S_\infty(\Gamma)$,
$k_v\leq k-2$; and $\Theta_{g_v,k_v}$ appears in $\{\Theta_{g',k'}\}_{g'<g,k'\le k-2}$.
Now suppose there is a $v\in V^S_\infty(\ga)$ so that $g_{v}=g$.
Then \eqref{combine-ine} implies $k_v\le  k$, thus either  $\Theta_{g_v,k_v}$
appears in $\{\varTheta_{g,k' }\}_{k'< k}$, or $k_v=k$.

We examine the remainder case $(g_v,k_v)=(g,k)$. Because $k=7g-2+5m$ and $d_\infty=g+m$, 
we conclude that $V_\infty(\Ga)=V_\infty^S(\ga)=\{v\}$, $V_{{\mathrm{exc}}}(\Gamma)=\emptyset=V_{\infty}^U(\Gamma)$, and $E_v=\{e_1,\cdots,e_{k}\}$ with $\deg\sN_{\sC_{e_i}}=-2/5$.
%
By Proposition \ref{prop-loc},
$[\cW_{(\Ga)}]\virtloc$ is a $\frac{1}{2^{k}\cdot k!}$ multiple of 
$[\Mbar_{g,(2^k)}^{1/5,5p}]\virtloc$.
We calculate $\contri(\Ga)$.

Recall $\ee_T(\cL\dual)=e_{\Gm}(R\pi_{\ast}\cL\dual\otimes \bL_{-1})$ (cf. \eqref{ee}); applying Theorem \ref{final} and
the formulae in the previous section, we get
\begin{eqnarray*}
  \contri(\Ga)&=&\Bigl[\ft^{\delta(g,\bd)} \cdot\frac{[(\cW\lgd)^{T}_{(\Gamma)}]\virtloc}
{e(N_{(\cW\lgd)^{T}_{(\Gamma)}/\cW\lgd})}\Bigr]_0\\
&=&\Bigl[ \ft^{1+m} (5\ft\cdot \frac{5\ft}{2})^{k}\cdot (\frac{1}{\frac{5\ft}{2}\cdot 5\ft})^{k} \cdot  
\displaystyle{ \frac{[\Mbar_{g,\gamma}^{1/5,5p}]\virtloc }{k!2^{k} \ee_T(\cL\dual)}}\frac{1}{\prod_{i=1}^{k} (-\frac{\ft}{2}-\psi_i)} \Bigr]_0\\
&=&\ft^{1+m}\frac{\varTheta_{g,k}}{k!2^{k}(-\ft)^{3-4m-7g}}\cdot\frac{1}{(-\frac{\ft}{2})^{k}}=(-1)^{m+1}(k!)^{-1}\varTheta_{g,k},
\end{eqnarray*}
where we have used $\vdim   [\Mbar_{g,\gamma}(G_5)^p]\virtloc=0$ and $\rank R\pi_{\ast}\cL\dual=3-4m-7g$.

This proves that the relation \eqref{con-3} expresses
$\Theta_{g,k}$ as a polynomial of terms indicated in the statement of the theorem.
\end{proof}

As a corollary, the collection $\{\varTheta_{1,k}\}_k$ (resp. $\{\varTheta_{2,k}\}_k$; resp. $\{\varTheta_{3,k}\}_k$)
can be determined effectively (resp. once $\Theta_{g=2,2}$ and $\Theta_{2,7}$ are known;
resp. once $\Theta_{g=3,4}$, $\Theta_{3,9}$, $\Theta_{3,14}$, $\Theta_{2,2}$ and $\Theta_{2,7}$ are known).

\subsection{After NMSP}

 In \cite{NMSP1,NMSP2,NMSP3} a package of (N)MSP fields leads to Feynman graph structures of quintic's GW potentials $F_g$'s. In particular this provides a very short determination of $F_1$ and $F_2$ in \cite{NMSP3}. There are two natural questions afterwards.
 \begin{enumerate}
 \item By Theorem \ref{main-1} the primitive FJRW invariant $\Theta_{2,2}$ determines $F_2$. Does the information of $F_2$ determine  $\Theta_{2,2}$ backwards? Note that $\Theta_{2,2}$ could be more difficult to be calculated, compared to GW invariants $N_{2,k}, k=0,1,2,3$ which admit interpretations as countings of maps. Similarly does $F_3$ determine $\Theta_{3,4}$?
 \item As the result in \cite{NMSP1,NMSP2,NMSP3} is a package of Theorem \ref{main-1}, is there analogous package for Theorem \ref{main-2}?
 If so, there should be Feynman structure among FJRW potentials $F^{LG}_g:=\sum_k \Theta_{g,k}q^k$ as  well.  \end{enumerate}
  
 We expect the answers to both above questions are  affirmative.

\end{document}